\documentclass[12pt,reqno]{amsart}
\usepackage[cp1251]{inputenc}
\usepackage[english]{babel}
\usepackage[breaklinks=true,colorlinks=true,linkcolor=blue,citecolor=blue,urlcolor=blue]{hyperref}
\usepackage{amsmath,amsopn,amssymb,amsthm}

\voffset -1.1cm
\hoffset -1.5cm

\textwidth=150mm \textheight=230mm

\newtheorem{theorem}{Theorem}

\begin{document}

\title[On the geodesic diameter of surfaces \dots]
{On the geodesic diameter of surfaces \\ with involutive isometry}
\author{Yu.G. Nikonorov}

\begin{abstract}
This is an English translation of the following paper, published several years ago:
{\it
Nikonorov Yu.G.
On the geodesic diameter of surfaces with involutive isometry (Russian),
Tr. Rubtsovsk. Ind. Inst., 2001, V. 9, 62--65, Zbl. 1015.53041}.
All inserted footnotes provide additional information related to the mentioned problem.

\vspace{2mm} \noindent 2010 Mathematical Subject Classification:
51M16, 53C45, 57M60.

\vspace{2mm} \noindent Key words and phrases: geodesic diameter, length metric space, intrinsic distance, surface, involutive isometry.
\end{abstract}

\maketitle

Let us consider a length metric space $(M,\rho)$, i.~e. the metric
$\rho$ on $M$ is intrinsic~\cite{Bak}.
This means that for every two points $x,y \in M$, the distance $\rho (x,y)$
is equal to the infimum of the lengths of all paths $\gamma \subset M$, connected the points
$x$ and $y$. For a curve $\gamma$, we denote by $l(\gamma)$ the length of $\gamma$ with respect to the metric $\rho$.
The shortest path is a curve $\gamma \subset M$ homeomorphic to the interval $[0,1]\subset \mathbb{R}$ and such that
$l(\gamma)=\rho(x,y)$,
where the points
$x$ and $y$ are the endpoints of the curve $\gamma$.
If a  length metric space $(M,\rho)$ is compact, then for every points $x,y \in M$ there is
a shortest path $\gamma$, connected them \cite{Bak}.
The value $\operatorname{diam}(M)=\sup\, \rho(x,y)$, where $x$ and $y$ are taken in $M$, is called the geodesic diameter of the space $(M,\rho)$.
If $(M,\rho)$ is compact, then $\operatorname{diam}(M)$ could be calculated as the maximal distance between
two points $x,y \in M$.

Note that we can consider
the boundary of a convex body in Euclidean space with an intrinsic metric,
induced by Euclidean one, as the space $(M, \rho)$. Even due this type of examples, it is clear that
calculating of the geodetic diameter is an extremely difficult problem.
For example, if we consider the boundary (surface) of a rectangular three-dimensional parallelepiped,
then the farthest point (in terms of intrinsic boundary metrics)
from a given vertex of the parallelepiped
is not necessarily the opposite vertex \cite{Nik}.\footnote{See also
{\it  Vyalyi M.N. The shortest paths along the parallelepiped surface (Russian) // Mat. Pros., Ser. 3, 9 (2005), 203--206;
Miller S.M., Schaefer E.F. The distance from a point to its opposite along the surface of a box // Pi Mu Epsilon J. 14(2) (2015),  143-154, arXiv:1502.01036.}}
However, the presence of some symmetry in the space $(M, \rho)$
allows to simplify the calculation of the geodetic diameter.

For a given metric space $(M, \rho)$, a map $I:M\rightarrow M$  is called an isometry if
$\rho (I(x),I(y))=\rho (x,y)$ for all $x,y\in M$.

An isometry $I:M\rightarrow M$ is called involutive if the equality  $I\circ I=\operatorname{Id}$ fulfilled, i.~e.
we get the identical map twice applying $I$.
In the case when the isometry has no fixed point ($I(x)\neq x$ for all
$x\in M$), points of the form $x$ and $I(x)$ will be called antipodal.
Recall that $y=I(x)$ implies $x=I(y)$.

Examples of metric spaces with involutive isometry are
surfaces bounding centrally symmetric convex bodies
in Euclidean space.
Involutive isometry in this case is the restriction of the central
symmetry to the boundary of the body under consideration.
It is natural to assume that the geodesic diameter of such
spaces can be calculated as the maximum distance between pairs
of antipodal points.

We have the following

\begin{theorem}
Let $(M,\rho)$ be a length metric space that is homeomorphic to the two-dimensional sphere $S^2$ and let $I:M\rightarrow M$ be an involutive isometry
without fixed points. Then there is $x\in M$ such that
$\operatorname{diam}(M)=\rho(x,I(x))$.\footnote{\, In the paper {\it V\^{\i}lcu C. On two conjectures of Steinhaus // Geometriae Dedicata, 79 (2000), 267--275},
the following version of this theorem is proved (see Proposition 6):
{\it Let $F$ be a convex centrally symmetric surface in $\mathbb{E}^3$, and $\operatorname{diam}(F)$
its intrinsic diameter. If $x,y \in F$ are such that $\operatorname{diam}(F)$ is equal to the intrinsic distance
between $x$ and $y$ on $F$, then the points $x$ and $y$ are symmetric to
each other
under the central symmetry of the surface $F$.}}
\end{theorem}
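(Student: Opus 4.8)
The plan is to reduce the statement to a single metric inequality and then attack it with the topology of the free involution. Since $M$ is homeomorphic to $S^2$ it is compact, so the continuous function $f(x)=\rho(x,I(x))$ (continuity follows from that of $\rho$ and $I$, and note $f=f\circ I$) attains its maximum $D=\rho(x_0,I(x_0))$ at some $x_0\in M$. Because each value $\rho(x,I(x))$ is a genuine distance, the inequality $D\le\operatorname{diam}(M)$ is immediate, and the whole theorem becomes equivalent to the reverse bound $\operatorname{diam}(M)\le D$. Passing to the quotient $N=M/I$, which is a closed surface double-covered by $S^2$ and hence homeomorphic to $\mathbb{RP}^2$, one sees that an arbitrary pair $x,y$ is either antipodal (so $\rho(x,y)\le D$ trivially) or projects to two distinct points of $N$. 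Thus it suffices to prove that every pair of points of $M$ is at distance at most $D$.

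First I would fix a diameter-realizing pair $p,q$ (it exists by compactness) and suppose for contradiction that $\rho(p,q)>D$; in particular $p$ and $q$ are not antipodal. The topological device I would use repeatedly is the following: given antipodal points $a,I(a)$ and a shortest path $\sigma$ joining them, the union $\sigma\cup I(\sigma)$ is a closed, $I$-invariant curve $C$ on which $I$ restricts to a fixed-point-free involution; assuming, after a surgery argument, that $C$ is simple, it separates $M\cong S^2$ into two open disks $U$ and $V$. Since $I(C)=C$ and $I$ has no fixed point, $I$ cannot preserve either disk --- otherwise Brouwer's theorem applied to its closure would supply a fixed point --- so $I(U)=V$. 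Applying this to a shortest path from $p$ to $I(p)$ produces an $I$-invariant \emph{equatorial} curve of length at most $2D$ that partitions $M$ into two antipodal halves, one containing $q$ and the other $I(q)$.

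Next I would try to extract an antipodal pair lying on a single shortest path. Parametrizing a shortest path $\gamma$ from $p$ to $q$ by arc length and considering the $I$-odd continuous function $h(t)=\rho(\gamma(t),p)-\rho(\gamma(t),I(p))$, which is negative at $p$ and nonnegative at $q$ (since $\rho(q,p)=\operatorname{diam}(M)\ge\rho(q,I(p))$), the intermediate value theorem yields interior points of $\gamma$ equidistant from $p$ and $I(p)$, and likewise from $q$ and $I(q)$. Combined with the separation property above, one aims to locate a genuine antipodal pair $w,I(w)$ on $\gamma$, equivalently a crossing $\gamma\cap I(\gamma)\neq\varnothing$. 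Because subarcs of shortest paths are shortest, the subarc of $\gamma$ between $w$ and $I(w)$ then has length $\rho(w,I(w))\le D$, and I would play the resulting decomposition $\rho(p,q)=\rho(p,w)+\rho(w,I(w))+\rho(I(w),q)$ against the companion relations obtained by applying $I$ and against the maximality $\rho(\cdot,I(\cdot))\le D$, so as to force $\rho(p,q)\le D$ and contradict the assumption.

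The main obstacle, and the step I expect to absorb most of the work, is precisely the passage from the topological crossing to the numerical bound: a single antipodal pair on $\gamma$ only controls the middle piece of the decomposition, so the delicate point is either to choose the crossing optimally or to exploit the variational fact that $x_0$ maximizes $f$, which morally renders the equatorial curve $C$ a closed geodesic along which every point sits at antipodal distance exactly $D$, and then to reroute an arbitrary shortest path across $C$. All of this has to be carried out purely metrically: there is no differentiable structure, shortest paths need not be unique, and the doubled curve may fail to be embedded, so the Brouwer and Jordan-curve reasoning together with the \emph{closed geodesic} heuristic must be replaced by careful length comparisons and limiting or surgery arguments.
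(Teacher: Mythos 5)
Your setup is sound and your topological lemma is exactly the one the paper uses: a shortest path $\sigma$ from an antipodal pair $a$ to $I(a)$, doubled to $C=\sigma\cup I(\sigma)$, made simple, separates $M\cong S^2$ into two disks that $I$ must swap (Jordan plus Brouwer, since a preserved closed disk would yield a fixed point). Two things are missing, one fixable and one fatal to your plan. The fixable one is the ``surgery argument'' you leave unstated: the paper makes $C$ simple by considering all pairs $(u,I(u))$ with both points on $\sigma$, choosing the pair $(\widetilde u,\widetilde v)$ minimizing $\rho(u,I(u))$ (compactness), and replacing $\sigma$ by the subarc $\gamma$ between them; minimality forces $\gamma\cap I(\gamma)=\{\widetilde u,\widetilde v\}$, so $\eta=\gamma\cup I(\gamma)$ is an embedded circle. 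You should supply this, but it is the argument you anticipated.

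The fatal gap is your endgame. You want an antipodal pair $w,I(w)$ on the diametral shortest path $\gamma_{p,q}$ and then to bound $\rho(p,q)=\rho(p,w)+\rho(w,I(w))+\rho(I(w),q)$ by $D$. This cannot close: only the middle term is controlled by $\rho(w,I(w))\le D$, the outer terms have no a priori bound, and you yourself flag this as unresolved. Worse, the existence of such a $w$ is not established: your intermediate-value function $h$ produces points \emph{equidistant} from $p$ and $I(p)$, which is not the same as points $w$ with $I(w)\in\gamma_{p,q}$, and the separation lemma gives no reason for $\gamma_{p,q}$ to meet $I(\gamma_{p,q})$ (the path starts on the equator and need never cross it). The paper's resolution intersects a different pair of curves: with $(x,y)$ a diametral pair, build $\eta$ from $\gamma_{x,I(x)}$; since $\rho(x,z)\le\rho(x,I(x))<\rho(x,y)$ for every $z\in\eta$, neither $y$ nor $I(y)$ lies on $\eta$, they fall in opposite disks, so the shortest path $\gamma_{y,I(y)}$ meets $\eta$ at some $t$ lying (after possibly replacing $\gamma_{x,I(x)}$ by its image under $I$) on \emph{both} antipodal shortest paths. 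Then
$$
2\operatorname{diam}(M)=\rho(x,y)+\rho(I(x),I(y))\leq \bigl[\rho(x,t)+\rho(t,I(x))\bigr]+\bigl[\rho(y,t)+\rho(t,I(y))\bigr]=\rho(x,I(x))+\rho(y,I(y))\leq 2\operatorname{diam}(M),
$$
forcing equality throughout and hence $\rho(x,I(x))=\operatorname{diam}(M)$. Redirect your argument to intersect $\gamma_{y,I(y)}$ with the equator built from $\gamma_{x,I(x)}$, rather than hunting for antipodal pairs on $\gamma_{p,q}$, and the proof closes in four lines.
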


\begin{proof}
It is easy to see that the map $I$ is a homeomorphism.
Let us consider points $x$ and $y$ such that $\operatorname{diam}(M)=\rho(x,y)$.
Such points do exist since the space $(M, \rho)$ is compact.
Suppose that $y\neq I(x)$ and consider the shortest curve
${\gamma}_{x,I(x)}$,
connecting $x$ and $I(x)$.
Note that the point $y$ (as well as its image $I(y)$ under the map $I$) could not be situated on the considered shortest curve.
Indeed, otherwise the inequality
$\rho(x,y)<\rho(x,I(x))\leq \operatorname{diam}(M)$ holds that is impossible.
Let us consider on the curve
${\gamma}_{x,I(x)}$
all pairs of points $(u,v)$ such that $v=I(u)$.
The set of such pairs is not empty, since the pair $(x,I(x))$ has a suitable property. Among all pairs with this property,
we chose the pair $(\widetilde{u},\widetilde{v})$ with the minimal value of $\rho(u,v)$.
Such pair exists due to the compactness of ${\gamma}_{x,I(x)}$.

Let $\gamma$ be a part of the curve
${\gamma}_{x,I(x)}$ between the chosen points $\widetilde{u}$ and $\widetilde{v}$.
It is easy to see that every points $u$ and $v$  on the curve
$\gamma$ with the property $v=I(u)$ are the endpoints of $\gamma$.
Indeed, the existence of some other such points contradicts to the choice of the pair
$(\widetilde{u},\widetilde{v})$.
Therefore, if we consider the curve $\widetilde{\gamma}$, the image of the curve $\gamma$ under the map $I$, then
$$
\widetilde{\gamma} \cap \gamma=\{\widetilde{u}, \widetilde{v}\}.
$$
Hence, the curve $\eta=\widetilde{\gamma} \cup \gamma$ is homeomorphic to the circle $S^1$,
and according to the Jordan curve theorem, it separates $M$ into two  regions homeomorphic to the two-dimensional disc (open ball),
i.~e. $M=\eta \cup D_1 \cup D_2$,
where each $D_i$ is homeomorphic to $D^2$.

It is clear that the curve $\eta$ does not contain the points $y$ and $I(y)$
(this contradicts to the choice of the points $x$ and $y$).
Without loss of generality, we may assume that $y \in D_1$.
Let us show that $I(y)\in D_2$. Indeed, since $I(\eta)=\eta$,
then either $I(D_1)=D_1$, or $I(D_1)=D_2$.
In the first case we get $I(\eta \cup D_1)=\eta \cup D_1$.
Since $\eta \cup D_1$ is homeomorphic to the closed two-dimensional ball,
then by Brouwer's fixed-point theorem, there is a point
$z\in \eta \cup D_1$ such that $I(z)=z$. The latter equality contradicts to the assumption that
the map $I$ has no fixed point. Therefore, we have
$I(D_1)=D_2$ and $I(y)\in D_2$.

Now, let
${\gamma}_{y,I(y)}$
be the shortest curve connected the points $y$ and $I(y)$. Since $y\in D_1$ and $I(y)\in D_2$, then the curves $\eta$ and
${\gamma}_{y,I(y)}$
have a non-empty intersection, i.~e. there is a point
$t\in M$ such that
$t\in {\gamma}_{y,I(y)} \cap \eta$.
Without loss of generality we may assume that
$t\in {\gamma}_{x,I(x)}$
(otherwise, one can consider the image of ${\gamma}_{x,I(x)}$ under the map $I$).

It is clear that $\rho(I(x),I(y))=\rho(x,y)=\operatorname{diam}(M)$.
Let us consider the triples of points $(x,t,y)$ and $(I(x),t,I(y))$.
According to the triangle inequality, we get
$$
\rho(x,y)\leq \rho(x,t)+\rho(t,y),
$$
$$
\rho(I(x),I(y))\leq
\rho(I(x),t)+\rho(t,I(y)).
$$
Adding these inequalities we get the following:
$$
2\operatorname{diam}(M)=\rho(x,y)+\rho(I(x),I(y))\leq
\rho(x,t)+\rho(I(x),t)+\rho(t,y)+\rho(t,I(y)).
$$
Further, using the fact that the point $t$ is situated on two shortest curves ${\gamma}_{x,I(x)}$ and ${\gamma}_{y,I(y)}$, we get
$$
\rho(x,t)+\rho(I(x),t)=\rho(x,I(x))=l({\gamma}_{x,I(x)})\leq \operatorname{diam}(M),
$$
$$
\rho(y,t)+\rho(I(y),t)=\rho(y,I(y))=l({\gamma}_{y,I(y)})\leq \operatorname{diam}(M).
$$
Consequently, three last inequalities are fulfilled if and only if they become equalities.
In particular,
$\rho(x,I(x))=\rho(y,I(y))=\operatorname{diam}(M)$,
q.e.d.
\end{proof}
\bigskip

It follows from the proved theorem that the geodesic diameter
of the surface bounding the convex centrally symmetric body
in three-dimensional space, equal to the maximum distance between pairs
of symmetric points, which greatly simplifies the calculation of this value. \footnote{\, In particular, this result was used
for the computation of the geodesic diameter of the boundary of a rectangular three-dimensional parallelepiped in the paper
{\it  Nikonorov Yu.G.,  Nikonorova Yu.V. The intrinsic diameter of the surface of a parallelepiped // Discrete and Computational Geometry, 40(4) (2008), 504--527;}
see also {\it Hess R., Grinstead Ch., Grinstead M., Bergstrand D. Hermit points on a box // College Math. J. 39(1) (2008),  12--23.}}
\medskip

It is also natural to raise the following question: {\it Is there an analogue of Theorem~1 for
length metric spaces homeomorphic to the sphere $S^q$
for some $q\geq 3$?~}\footnote{\, It follows from recent results of A.V.~Podobryaev and Yu.L.~Sachkov
({\it Podobryaev A.V., Sachkov Yu.L. Cut locus of a left invariant Riemannian metric on $SO_3$ in the axisymmetric case //
Journal of Geometry and Physics, 110 (2016), 436--453;
Podobryaev A.V. Diameter of the Berger Sphere // Mathematical Notes, 193(5) (2018), 846–-851};
see the formula (4) in the first paper and the proof of Theorem 1 in the second paper) that a three-dimensional analogue of Theorem 1
is not true even for some of the Berger spheres (the three-dimensional sphere with standard Riemannian metric compressed
along the fibers of the Hopf fibration).
The group $SU(2)$ is diffeomorphic to $S^3$ and could be considered as the group of unit quaternions
($a_0+a_1{\bf i}+a_2{\bf j}+a_3 {\bf k}$, $a_i \in \mathbb{R}$, $a_0^2+a_1^2+a_2^2+a_3^2=1$).
The center of $SU(2)$ consists of two elements $\pm 1$. The set of left-invariant metrics on $SU(2)$ is 3-parametrical, the corrresponding parameters
are the eigenvalues $I_1, I_2, I_3 >0$ of a given metric with respect of standard (Killing) metric on $SU(2)$.
The Berger spheres correspond to the case $I_1=I_2$. Due to the homogeneity, the diameter of any Berger sphere could be computed as the intrinsic distance
between $1$ and the farthest point from $1$ in~$S^3$. It should be noted that the farthest point from $1$ is exactly the point $-1$ for $I_1=I_2<2I_3$,
whereas the set of the farthest points from $1$ is the circle
$\{a_0 + a_1{\bf i}+a_2{\bf j}\, |\, a_0 = I_3/(I_3-I_1)\}$ for $I_1=I_2>2I_3$ (we thank Alexey Podobryaev for an interesting discussion).
It is clear that for any left-invariant metric on $SU(2)$ the map $a \mapsto -a$
(which is the multiplication  by the central element $-1$) is an involutive isometry (and a Clifford--Wolf translation, moving all points the same distance).
Therefore, a three-dimensional analogue of Theorem 1 is not true for this involutive isometry on every Berger metric with $I_1=I_2>2I_3$.}

Note that in the case of $q=1$ the theorem remains valid,
however, its assertion becomes trivial.

\vspace{3mm}


\begin{thebibliography}{99}



\bibitem{Bak}
Bakel'man I.Ya., Verner A.L., Kantor B.E. Introduction to differential geometry ``in the large'' (Russian),
Izdat. ``Nauka'', Moscow, 197, 440 pp.

\bibitem{Nik}
Nikonorov Yu.G., Nikonorova Yu.V.
On the intrinsic distance on the surface of a parallelepiped (Russian) //
Tr. Rubtsovsk. Ind. Inst., 2000, V. 7, 222--228, Zbl. 0958.52014.

\end{thebibliography}
\end{document}